\numberwithin{equation}{section}
\numberwithin{figure}{section}
\numberwithin{table}{section}
\newtheorem{thm}{Theorem}[section]
\newtheorem{lem}[thm]{Lemma}
\newtheorem{cor}[thm]{Corollary}
\theoremstyle{definition}
\theoremstyle{remark}
\newcommand{\al}{\alpha}
\newcommand{\ga}{\gamma}
\newcommand{\e}{\varepsilon}
\newcommand{\fy}{\varphi}
\newcommand{\p}{\partial}
\newcommand{\I}{\infty}
\newcommand{\Sc}[1]{\mathcal{#1}}
\newcommand{\Bo}[1]{\mathbb{#1}}
\newcommand{\R}{\Bo{R}}
\newcommand{\T}{\Bo{T}}
\newcommand{\lec}{\lesssim}
\newcommand{\gec}{\gtrsim}
\newcommand{\shugo}[1]{\{ #1\}}
\newcommand{\Shugo}[2]{\big\{ \, #1 \, \big| \, #2 \, \big\}}
\newcommand{\eq}[2]{\begin{equation} \label{#1} \begin{split} #2 \end{split} \end{equation}}
\newcommand{\eqq}[1]{\begin{equation*} \begin{split} #1 \end{split} \end{equation*}}
\newcommand{\mat}[1]{\begin{smallmatrix} #1 \end{smallmatrix}}
\newcommand{\norm}[2]{\big\| #1 \big\| _{#2}}
\newcommand{\tnorm}[2]{\| #1 \| _{#2}}
\newcommand{\hx}{\hspace{10pt}}
\newcommand{\eqs}[1]{\begin{gather*} #1 \end{gather*}}
\title[Periodic nonlinear Schr\"odinger equation on torus]{Remark on the periodic mass critical nonlinear Schr\"odinger equation}
\author[Nobu Kishimoto]{Nobu Kishimoto}
\address{Department of Mathematics, Kyoto University, Kyoto 606-8502, Japan}
\email{n-kishi@math.kyoto-u.ac.jp}
\thanks{This work was partially supported by Grant-in-Aid for Scientific Research 23840022.}
\begin{document}

\begin{abstract}
We consider the mass critical NLS on $\T$ and $\T ^2$.
In the $\R ^d$ case the Strichartz estimates enable us to show well-posedness of the IVP in $L^2$ (at least for small data) via the Picard iteration method.
However, counterexamples to the $L^6$ Strichartz on $\T$ and the $L^4$ Strichartz on $\T ^2$ were given by Bourgain (1993) and Takaoka-Tzvetkov (2001), respectively, which means that the Strichartz spaces are not suitable for iteration in these problems.
In this note, we show a slightly stronger result, namely, that the IVP on $\T$ and $\T^2$ cannot have a smooth data-to-solution map in $L^2$ even for small initial data.
The same results are also obtained for most of the two dimensional irrational tori.
\end{abstract}

\maketitle


\section{Introduction}
We consider the initial value problem of the mass critical nonlinear Schr\"odinger equation with periodic boundary condition:
\begin{equation}\label{NLS}
\left\{
\begin{array}{@{\,}r@{\;}l}
i\p _tu+\Delta u&=\mu |u|^{4/d}u,\qquad (t,x)\in \R \times \T ^d,\\
u(0,x)&=u_0(x)\quad \in L^2(\T ^d).
\end{array}
\right.
\end{equation}
Here, $\T ^d:=\R ^d/(2\pi \Bo{Z})^d$ denotes the $d$ dimensional torus and $\mu =\pm 1$.

Concerning the initial value problem in $H^s(\T ^d)$ in the case $d=1,2$, Bourgain~\cite{B93-1} established the local well-posedness for $s>0$ via the Picard iteration method, and Christ, Colliander, Tao~\cite{CCT03-2} showed the ill-posedness for $s<0$ in the sense that the data-to-solution map is not continuous as a map from $H^s(\T)$ to even the space of distributions $(C^\I (\T))^*$ if $s<0$.
However, there is no result on the well-posedness of \eqref{NLS} in $L^2$ (neither positive nor negative), even for small data.

This difficulty comes from the lack of the periodic Strichartz estimate in the critical case.\footnote{The energy critical defocusing NLS on $\T^3$ is known to be globally well-posed in the critical space $H^1(\T^3)$; see \cite{HTT10,IP11}.
In that case, trilinear Strichartz-type estimates with no loss of regularity (with respect to scaling) are available and play a crucial role in the proof, which is based on the Picard iteration.}
In fact, some counterexamples were given in the case of $d=1,2$ (namely, in the case that the nonlinearity $|u|^{4/d}u$ is algebraic) as follows. 
\begin{thm}[Bourgain~\cite{B93-1}, Takaoka-Tzvetkov~\cite{TT01}]\label{thm:counterex}
The estimates
\eq{counterex1}{\norm{e^{it\p _x^2}\phi}{L^6_{t,x}(\T \times \T)}\lec \norm{\phi}{L^2(\T)}}
and
\eq{counterex2}{\norm{e^{it\Delta}\phi}{L^4_{t,x}(\T \times \T^2)}\lec \norm{\phi}{L^2(\T^2)}}
do not hold.
\end{thm}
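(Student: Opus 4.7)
My plan is to construct, for each $d\in \shugo{1,2}$, a sequence of Dirichlet-kernel-type initial data
\eqq{\phi_N(x) := \sum _{k\in \shugo{1,\ldots,N}^d} e^{ik\cdot x} \in L^2(\T^d), \hx \tnorm{\phi_N}{L^2(\T^d)}\sim N^{d/2},}
for which the ratio $\tnorm{e^{it\Delta}\phi_N}{L^p_{t,x}}/\tnorm{\phi_N}{L^2}$ diverges logarithmically as $N\to \I$, where $p=2+4/d\in \shugo{6,4}$ is the mass-critical Strichartz exponent. Since $\phi_N$ concentrates near $x=0$ with amplitude $\sim N^d$ on a set of measure $\sim N^{-d}$, the free evolution $u(t,x)=e^{it\Delta}\phi_N(x)$ refocuses periodically at rational times, and this refocusing is the source of the divergence.

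The analysis is based on the Hardy--Littlewood representation of the linear Schr\"odinger flow on the torus at rational times: for $\gcd (a,q)=1$,
\eqq{e^{i(2\pi a/q)\Delta}\phi(x) = q^{-d}\sum _{b\in (\Bo{Z}/q\Bo{Z})^d} G(a,q,b)\,\phi \big(x-\ttfrac{2\pi b}{q}\big),}
where $G(a,q,b)$ is a Gauss sum of modulus $\sim q^{d/2}$. Applied to $\phi _N$, this decomposes $u(2\pi a/q,\cdot)$ into $q^d$ translates of $\phi _N$ at the lattice points $\ttfrac{2\pi b}{q}\in \T^d$, each of effective amplitude $\sim N^d q^{-d/2}$ on a support of scale $\sim N^{-1}$. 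Since wave packets of width $\sim N^{-1}$ and frequency $\sim N$ have dispersive time scale $\sim N^{-2}$, this refocused profile persists throughout $|t-2\pi a/q|\lec N^{-2}$, during which
\eqq{\tnorm{u(t,\cdot)}{L^p(\T^d)}^p \sim q^d\cdot (N^d q^{-d/2})^p\cdot N^{-d} = N^{(p-1)d}\, q^{d-pd/2}.}
Integrating over the dispersive windows and summing over Farey fractions $a/q\in [0,1]$ with $q\leq N$ (using $\#\shugo{a:\gcd (a,q)=1}=\varphi (q)$ and $\sum _q\varphi (q)/q^2\sim \log N$), together with the critical-exponent identities $(p-1)d-2=pd/2$ and $pd/2-d=2$, gives
\eqq{\tnorm{e^{it\Delta}\phi_N}{L^p_{t,x}(\T\times \T^d)}^p \gec \sum _{q=1}^N \frac{\varphi (q)}{q^2}\cdot N^{pd/2} \sim N^{pd/2}\log N.}
Compared with $\tnorm{\phi_N}{L^2}^p\sim N^{pd/2}$, this yields a logarithmic divergence of the Strichartz ratio, refuting both \eqref{counterex1} and \eqref{counterex2}.

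The main technical point is to justify rigorously that the pointwise lower bound on $u(t,\cdot)$ provided by the Hardy--Littlewood formula at $t=2\pi a/q$ is stable over the full dispersive window $|t-2\pi a/q|\lec N^{-2}$, and that the contributions from distinct Farey fractions combine without cancellation when integrated in $L^p_t$. Both issues can be handled within the standard Bourgain framework via wave-packet decomposition and stationary-phase bounds for incomplete Gauss sums, with some extra care required when $q$ approaches $N$ and the dispersive intervals around adjacent Farey rationals begin to overlap.
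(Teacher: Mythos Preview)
Your proposal is correct and follows the classical Bourgain approach via Weyl sums and refocusing at rational times; the arithmetic you give (the exponent identities $(p-1)d-2=pd/2$ and $pd/2-d=2$, and the appearance of $\sum_{q\le N}\varphi(q)/q^2\sim\log N$) is exactly right. However, the paper deliberately takes a different route. Rather than analyzing $e^{it\Delta}\phi_N$ pointwise in physical space near Farey times, the paper expands $\tnorm{e^{it\Delta}\phi_{1,N}}{L^p_{t,x}(\T^{1+d})}^p$ directly by Plancherel as a pure lattice-point count: for $d=2$ one obtains $(2\pi)^3N^{-4}\#\{(k_1,\dots,k_4)\in(\Bo{Z}_N^2)^4:\ k_1-k_2+k_3-k_4=0,\ |k_1|^2-|k_2|^2+|k_3|^2-|k_4|^2=0\}$, and the logarithmic divergence follows from the elementary counting lemma $\#\{(k_1,k_3)\in(\Bo{Z}_N^2)^2:\ k_1\cdot k_3=0\}\gec N^2\log N$, where, amusingly, the same sum $\sum_p\varphi(p)/p^2$ reappears. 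The $d=1$ case is reduced to the same lemma by a linear change of variables.

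The trade-offs: your (Bourgain's) approach is more physical and makes the mechanism of failure transparent, but it leaves genuine analytic work---the stability of the refocused profile over the window $|t-2\pi a/q|\lec N^{-2}$ and the near-disjointness of the Farey windows---which you correctly flag as the main technical points. The paper's approach bypasses all of that: no Gauss sums, no stationary phase, no time-localization, just Plancherel and integer counting. More importantly for the paper's purposes, the same resonance set $\Gamma(k)$ and the same counting lemma are exactly what drive the proof of Theorem~\ref{thm:notbounded} on the unboundedness of the first Picard iterate, so the argument is recycled rather than developed separately. Both methods yield the sharp lower bound $(\log N)^{1/p}$.
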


In the case of $\R^d$, the corresponding Strichartz estimates hold and play an essential role in well-posedness theory at the critical regularity.
In fact, one can easily show the small-data local well-posedness with a smooth data-to-solution map by the iteration argument using these Strichartz estimates.
This is also true even for the 2d semi-periodic case $\R \times \T$~\cite{TT01}.
On $\T$ or $\T^2$, however, the above theorem suggests that the Strichartz space $L^{2+4/d}_{t,x}(\T ^{1+d})$ is no longer appropriate for the resolution space when we try to apply the iteration.
Then, a natural question is whether or not there is any other space suitable for iteration.

Firstly, we will show that for $d=1,2$ the first nonlinear term in the Picard iteration scheme is not bounded in $L^2$.
\begin{thm}\label{thm:notbounded}
Let $d=1,2$ and $N,m\in \Bo{N}$.
Define $\phi _{m,N}\in L^2(\T^d)$ by
\[ \phi _{m,N}(x):=N^{-\frac{d}{2}}\sum _{k\in \Bo{Z}_N^d}e^{imk\cdot x},\]
where $\Bo{Z}_N^d:=\Shugo{(k_1,\dots ,k_d)\in \Bo{Z}^d}{|k_j|\le N,~1\le j\le d}$.
Then, we have $\tnorm{\phi _{m,N}}{L^2(\T^d)}\sim 1$ and
\[ \norm{A[\phi _{m,N}](\frac{2\pi}{m^2})}{L^2(\T^d)}\gec \frac{1}{m^2}\log N,\]
where
\[ A[\phi ](t):=-i\mu \int _0^t e^{i(t-t')\Delta}\Big[ |e^{it'\Delta}\phi |^{\frac{4}{d}}e^{it'\Delta}\phi \Big] \,dt'.\]
\end{thm}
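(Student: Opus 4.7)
The plan is to exploit a resonance phenomenon at the time $T=2\pi/m^2$. Observe first that the linear evolution
\[
u(t,x):=e^{it\Delta}\phi _{m,N}(x)=N^{-d/2}\sum _{k\in \Bo{Z}_N^d}e^{imk\cdot x-itm^2|k|^2}
\]
is periodic in $t$ with period $2\pi/m^2$, because $m^2|k|^2\in \Bo{Z}$, and the bound $\tnorm{\phi _{m,N}}{L^2}\sim 1$ follows from $|\Bo{Z}_N^d|\sim N^d$.

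Expanding $|u|^{4/d}u$ as a multilinear Fourier series (quintic for $d=1$, cubic for $d=2$) and taking the Fourier coefficient of $A[\phi _{m,N}](T)$ at a frequency $n$, one sees that the coefficient vanishes unless $n\in m\Bo{Z}^d$, and for $n=m\ell$ it equals
\[
-i\mu\, N^{-d/2-2}\, e^{-iTm^2|\ell|^2}\sum _{\vec k}\int _0^T e^{it'm^2B(\vec k,\ell)}\,dt',
\]
where the sum runs over tuples $\vec k\in (\Bo{Z}_N^d)^{1+4/d}$ satisfying the linear constraint $\sum _j\pm k_j=\ell$ coming from the $x$-oscillation, and $B(\vec k,\ell):=|\ell|^2-\sum _j\pm|k_j|^2\in \Bo{Z}$ is the associated resonance function.

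The decisive identity is $\int _0^{2\pi/m^2}e^{it'm^2B}\,dt'=0$ for every $B\in \Bo{Z}\setminus\{0\}$, whereas each resonant tuple (with $B=0$) contributes exactly $2\pi/m^2$. Therefore at $T=2\pi/m^2$ only the resonant tuples survive, and since $e^{-2\pi i|\ell|^2}=1$ we obtain
\[
\bigl|\hhat{A[\phi _{m,N}](2\pi/m^2)}(m\ell)\bigr|=\frac{2\pi}{m^2}\,N^{-d/2-2}\,R(\ell,N),
\]
where $R(\ell,N)$ denotes the number of tuples satisfying both the linear constraint and $B(\vec k,\ell)=0$.

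By Plancherel, the target estimate reduces to $\sum _\ell R(\ell,N)^2\gec N^{d+4}(\log N)^2$. Since $R(\ell,N)$ is supported on $O(N^d)$ values of $\ell$ (because $|\ell|\lec N$ there), Cauchy-Schwarz gives $\sum _\ell R(\ell,N)^2\gec N^{-d}\bigl(\sum _\ell R(\ell,N)\bigr)^2$, so it suffices to show $\sum _\ell R(\ell,N)\gec N^{d+2}\log N$. Restricting the outer sum to $\ell\in \Bo{Z}_N^d$ and transposing $\ell$ to the opposite side of the two constraints identifies $\sum _{\ell\in \Bo{Z}_N^d}R(\ell,N)$ with the Strichartz-type counting function
\[
\#\bigl\{\vec k\in (\Bo{Z}_N^d)^{2+4/d}\ :\ \textstyle\sum _j\pm k_j=0,\ \sum _j\pm |k_j|^2=0\bigr\},
\]
whose lower bound $\gec N^{d+2}\log N$ is precisely the counting estimate underlying the proofs of Theorem~\ref{thm:counterex} by Bourgain ($d=1$) and Takaoka-Tzvetkov ($d=2$). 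The main obstacle is just the bookkeeping of $N$- and $m$-powers through Plancherel; the resonance-selection identity at $T=2\pi/m^2$ is the clean conceptual step that does the actual work.
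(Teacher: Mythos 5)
Your overall frame is right and matches the paper's: evolve, observe that at $T=2\pi/m^2$ the oscillatory integral $\int_0^{T}e^{it'm^2B}\,dt'$ kills every non-resonant tuple ($B\neq 0$) and contributes $2\pi/m^2$ to each resonant one, then apply Plancherel and reduce to a lower bound on a counting function. The paper does exactly this and arrives at $\hat{A}[\phi_{m,N}](\tfrac{2\pi}{m^2},mk)=cm^{-2}N^{-d/2-2}\#\Gamma(k)$ (with $\Gamma'(k)$ in place of $\Gamma(k)$ when $d=1$).

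The gap is in how you dispose of the counting estimate. You reduce, via Cauchy--Schwarz, to $\sum_\ell R(\ell,N)\gec N^{d+2}\log N$ and assert that this is ``precisely the counting estimate underlying the proofs of Theorem~\ref{thm:counterex} by Bourgain ($d=1$) and Takaoka--Tzvetkov ($d=2$).'' For $d=1$ this is defensible (Bourgain obtains $(\log N)^{1/6}$, equivalently the six-fold count $\gec N^{3}\log N$). But for $d=2$ Takaoka--Tzvetkov only proved the weaker $(\log\log N)^{1/4}$, i.e.\ the four-fold count $\gec N^{4}\log\log N$. So if you plug in the existing literature, your argument yields $\norm{A[\phi_{m,N}](\tfrac{2\pi}{m^2})}{L^2}\gec m^{-2}\log\log N$ for $d=2$, not $m^{-2}\log N$. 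The improvement from $\log\log N$ to $\log N$ is precisely the new technical content of this paper (Section~3 explicitly records that it upgrades $(\log\log N)^{1/4}$ to $(\log N)^{1/4}$ for the $L^4$ Strichartz), so it cannot be quoted as already known. What you have labeled ``just bookkeeping'' is in fact the heart of the matter.

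The paper's route is also somewhat stronger than yours: it establishes the \emph{pointwise} bound $\#\Gamma(k)\gec N^2\log N$ for all $k\in\Bo{Z}^2_{N/2}$, not just the average $\sum_\ell$. It does this by the observation that the resonance conditions force $\overline{kk_1},\overline{k_1k_2},\overline{k_2k_3},\overline{k_3k}$ to form a rectangle, translating to $k=0$, and then counting pairs $(k_1,k_3)\in(\Bo{Z}_N^2)^2$ with $k_1\cdot k_3=0$. That count is $\sum_{p\le N}[N/p]^2\varphi(p)\sim N^2\sum_{p\le N}\varphi(p)/p^2\gec N^2\log N$ via Euler's totient and $\sum_{d|n}\varphi(d)=n$ --- this divisor-counting lemma is the actual engine of the proof and is absent from your proposal. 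The $d=1$ case is then reduced to the same 2d lemma by the parallelogram-type substitution $l_j=\tfrac{1}{2}(k_{2j-1}+k_{2j})$, $n_j=\tfrac{1}{2}(k_{2j-1}-k_{2j})$, rather than by re-quoting Bourgain. Your Cauchy--Schwarz averaging is a legitimate alternative reduction, but it does not dispense with the need to prove the counting lower bound; it merely weakens the target from a pointwise bound to an averaged one (which, as it happens, does not save any work here since the pointwise bound is just as easy).
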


The main result in this note is obtained as a corollary of the above theorem.
This may be just a small but the first step toward the expected goal of extending local well-posedness to $L^2$. 
\begin{cor}\label{cor:notsmooth}
Let $d=1,2$ and $T,r>0$ be arbitrarily small positive constants.
Assume that the data-to-solution map $S:u_0\mapsto u(\cdot)$ associated with \eqref{NLS} on smooth data extends continuously to a map from the closed ball in $L^2(\T^d)$ of radius $r$ centered at the origin into $C([0,T];L^2(\T^d))$.
Then, this map will not be $C^5$ (resp. $C^3$) at the origin when $d=1$ (resp. $d=2$).
\end{cor}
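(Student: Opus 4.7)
My plan is to prove the contrapositive: assume $S$ extends to a $C^k$ map from the closed $L^2$-ball of radius $r$ to $C([0,T];L^2(\T^d))$ with $k=1+4/d$ (so $k=5$ when $d=1$ and $k=3$ when $d=2$), and derive a contradiction with Theorem~\ref{thm:notbounded}. The value $k=1+4/d$ is singled out because $|u|^{4/d}u=u^{1+2/d}\bar u^{2/d}$ is a polynomial of total degree exactly $k$ in $(u,\bar u)$ for $d=1,2$, which means the first nonlinear contribution in the Picard expansion of $S(\epsilon\phi)$ appears at order $\epsilon^k$.

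The main step is to show the $C^k$ hypothesis implies
\[
 \tnorm{A[\phi](t)}{L^2(\T^d)}\lec \tnorm{\phi}{L^2(\T^d)}^k
\]
uniformly in $t\in[0,T]$ for all smooth $\phi$ in a small $L^2$-neighborhood of $0$. Using the $k$-homogeneity $A[c\phi]=c^kA[\phi]$ ($c\in\R$), this extends by scaling to all smooth $\phi$, and applied to $\phi_{m,N}$ (which is a trigonometric polynomial with $\tnorm{\phi_{m,N}}{L^2}\sim 1$) gives $\tnorm{A[\phi_{m,N}](t)}{L^2}\lec 1$ uniformly in $t\in[0,T]$ and $N$. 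Fixing $m$ with $2\pi/m^2\le T$ and letting $N\to\I$, the lower bound $\tnorm{A[\phi_{m,N}](2\pi/m^2)}{L^2}\gec m^{-2}\log N$ from Theorem~\ref{thm:notbounded} contradicts this.

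The derivation of the displayed bound proceeds as follows. For smooth $\phi$ and small real $\epsilon$, classical local well-posedness makes $u_\epsilon:=S(\epsilon\phi)$ a $C^\I$ curve in $\epsilon$ valued in $C([0,T];L^2(\T^d))$, satisfying
\[
 u_\epsilon(t)=\epsilon e^{it\De}\phi-i\mu\int_0^t e^{i(t-t')\De}\bigl[u_\epsilon(t')^{1+2/d}\,\overline{u_\epsilon(t')}^{2/d}\bigr]\,dt'.
\]
Since $u_0\equiv 0$, when differentiating $j$ times in $\epsilon$ at $\epsilon=0$ any contribution in which fewer than $k$ derivatives have been distributed among the $k$ factors of the nonlinearity still contains an undifferentiated factor $u_0$ or $\bar u_0$ and hence vanishes. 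Therefore $\p_\epsilon^j u_\epsilon|_{\epsilon=0}=0$ for $2\le j\le k-1$, while at $j=k$ each of the $k$ factors is differentiated exactly once, yielding $\p_\epsilon^k u_\epsilon|_{\epsilon=0}=k!\,A[\phi]$. By uniqueness of Taylor expansions of $C^k$ functions these values coincide with the Fréchet derivatives $D^jS(0)[\phi,\dots,\phi]$, and the boundedness of the $k$-linear form $D^kS(0)$ (part of the $C^k$ hypothesis) produces the desired bound.

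The main obstacle is the rigorous justification that $\frac{1}{k!}D^kS(0)[\phi,\dots,\phi]=A[\phi]$: one must verify that for smooth $\phi$ the curve $\epsilon\mapsto u_\epsilon$ is genuinely $C^k$ as a $C([0,T];L^2)$-valued function and that its Taylor coefficients at $\epsilon=0$ indeed match those obtained by iterating Duhamel. This should reduce to standard smooth-dependence of NLS solutions on initial data in $H^s$ for large $s$, restricted to the line $\epsilon\phi$; because the nonlinearity is polynomial in $(u,\bar u)$ and $\epsilon$ varies on the real line only, real Fréchet differentiability suffices and the non-holomorphicity of $|u|^{4/d}u$ causes no difficulty.
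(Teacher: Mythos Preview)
Your proposal is correct and follows essentially the same approach as the paper: assume $C^k$ regularity of $S$ at the origin (with $k=1+4/d$), identify $D^kS(0)[\phi,\dots,\phi]$ with a constant multiple of $A[\phi]$ via the Picard/Taylor expansion for smooth data, and then contradict Theorem~\ref{thm:notbounded} by taking $\phi=\phi_{m,N}$ with $m$ large enough that $2\pi/m^2\le T$. The paper states this in two sentences, while you spell out the identification of the Taylor coefficients and add the homogeneity/scaling step; the latter is fine but not strictly needed, since boundedness of the multilinear form $D^kS(0)$ already gives $\tnorm{A[\phi]}{C_tL^2_x}\lec\tnorm{\phi}{L^2}^k$ for all $\phi$, not just small ones.
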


Results of this type were first mentioned by Bourgain~\cite{B97} in the context of the KdV and the modified KdV initial value problems on $\R$ and $\T$.
To prove Corollary~\ref{cor:notsmooth}, we assume it not to hold.
Then, for 1d, the map $\phi \mapsto D^{5}S[\phi,\dots ,\phi ](0)=30A[\phi ]$ from $L^2(\T )$ to $C([0,T];L^2(\T ))$ would be continuous under the assumption of $C^5$.
This contradicts to Theorem~\ref{thm:notbounded} when we choose $m$ sufficiently large so that $\frac{2\pi}{m^2}\le T$.
The same argument is applicable to 2d.

Our result suggests that in the purely periodic setting the standard iteration argument, which would naturally give an analytic data-to-solution map, should fail to work for the mass critical NLS in $L^2$, even for small data.
This is a big difference from the nonperiodic case.

We next take the spatial period into account and consider the torus $\T^d_\al:=\R ^d/(\al _1\Bo{Z}\times \cdots \times \al _d\Bo{Z})$ with a general period $\al =(\al _1,\dots ,\al _d)\in \R _+^d$.
Such consideration makes no sense when $d=1$, since a scaling argument allows us to normalize one component of $\al$. 
In two or higher dimensions, however, \emph{rationality} of the torus may be of interest.
We say the 2d torus $\T^2_\al$ is rational (\mbox{resp.} irrational) if the ratio $\al _2/\al _1$ is (\mbox{resp.} is not) rational.
We refer to \cite{B07,CW10} for the Strichartz estimates on  general tori (both rational and irrational), which in general require a substantial amount of additional regularity.

It turns out that our argument for $d=2$ with a slight modification is applied not only to all of the rational tori but also to most of the irrational tori.
To our knowledge, this is the first result concerning 2d irrational tori at the $L^2$ regularity.
\begin{thm}\label{thm:iratori}
Let $\ga >0$ and $\T^2_\ga:=\R^2/(2\pi \Bo{Z}\times 2\pi\ga \Bo{Z})$.
Suppose that $\gamma$ satisfies the following:
\eq{dc}{\text{For any $\e >0$, there exist $p,q\in \Bo{N}$ such that $\Big| q^2-\frac{p^2}{\ga ^2}\Big| <\e$.}}
Then, the $L^4$ Strichartz estimate
\eqq{\norm{e^{it\Delta}\phi}{L^4_{t,x}([0,T]\times \T_\ga ^2)}\lec \norm{\phi}{L^2(\T _\ga ^2)}}
does not hold for any $T>0$.
Moreover, the statement in Corollary~\ref{cor:notsmooth} with $d=2$ and $\T^2$ replaced by $\T^2_\ga$ holds.
Furthermore, the set of all $\ga >0$ not satisfying \eqref{dc} is of Lebesgue measure zero.
\end{thm}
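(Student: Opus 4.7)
The plan is to handle the three assertions of Theorem~\ref{thm:iratori} separately: (a) failure of the $L^4$ Strichartz estimate on $\T^2_\ga$; (b) the analogue of Corollary~\ref{cor:notsmooth}; and (c) the measure-zero statement. Parts (a) and (b) will be established by a slight modification of the $d=2$ constructions in the proofs of Theorems~\ref{thm:counterex} and \ref{thm:notbounded}, while (c) is a classical Diophantine-approximation argument.

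For (a) and (b), given $T>0$, I would choose $m\in\Bo{N}$ with $2\pi/m^2\le T$, and for each large $N$ invoke \eqref{dc} with a tolerance $\e=\e(N)$ to produce $p,q\in\Bo{N}$ with $|q^2-p^2/\ga^2|<\e$. The candidate test function is the $\T^2_\ga$-analogue of $\phi_{m,N}$,
\[ \Phi(x) := \frac{1}{N}\sum_{k\in\Bo{Z}_N^2} e^{i(mk_1x_1+mpk_2x_2/\ga)}, \]
which is well defined on $\T^2_\ga=\R^2/(2\pi\Bo{Z}\times 2\pi\ga\Bo{Z})$ because $(mk_1,mpk_2)\in\Bo{Z}^2$. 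Using the tensor-product factorization $\Phi(x)=N^{-1}D_N(mx_1)D_N(mpx_2/\ga)$ together with standard Dirichlet-kernel estimates, one gets $\tnorm{\Phi}{L^2(\T^2_\ga)}\sim 1$ and an $L^4$-norm that grows with $N$ exactly as on the rational torus.

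The essential observation is that the eigenvalue of $-\De$ on the $k$-th mode is
\[ m^2\bigl(k_1^2+(p^2/\ga^2)k_2^2\bigr)=m^2\bigl(k_1^2+q^2k_2^2\bigr)+m^2\de k_2^2,\qquad \de:=p^2/\ga^2-q^2,\ |\de|<\e, \]
so the substitution $k_2\mapsto pk_2$ combined with \eqref{dc} converts the irrational-torus problem into an approximately rational one with integer part $k_1^2+q^2k_2^2$ plus a controlled perturbation $\de k_2^2$. For (a), expanding $\norm{e^{it\De}\Phi}{L^4_{t,x}([0,T]\times\T^2_\ga)}^4$ as a sum over frequency quadruples $(k,k',l,l')$ subject to the additive constraint $k+l'=k'+l$, the time integrand becomes $e^{-2itm^2(a_1b_1+q^2a_2b_2+\de a_2b_2)}$ with $a,b\in[-2N,2N]^2$; calibrating $\e$ small enough that the $\de$-perturbation is negligible over $[0,T]$, the rational-torus count from the proof of Theorem~\ref{thm:counterex} transfers and yields a lower bound contradicting Strichartz on $\T^2_\ga$. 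For (b), the same modification applied inside the proof of Theorem~\ref{thm:notbounded} carries the $\gec\log N/m^2$ lower bound on the relevant Fourier coefficient of $A[\phi_{m,N}](2\pi/m^2)$ over to the $\T^2_\ga$-version, and the argument following Corollary~\ref{cor:notsmooth} then rules out $C^3$ dependence of the flow on $L^2$ data.

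For (c), the factorization $q^2-p^2/\ga^2=\ga^{-2}(q\ga-p)(q\ga+p)$, specialized to $p$ closest to $q\ga$, gives $|q^2-p^2/\ga^2|\sim 2q\ga^{-1}\|q\ga\|$ for large $q$ (with $\|\cdot\|$ the distance to the nearest integer); hence failure of \eqref{dc} is equivalent to $\liminf_{q\to\I}q\|q\ga\|>0$, the classical badly-approximable condition. By Khinchin's theorem applied to $\psi(q)=c/q$ (using $\sum 1/q=\I$), for every $c>0$ and almost every $\ga$ there are infinitely many $q$ with $q\|q\ga\|<c$, so $\liminf q\|q\ga\|=0$ a.e.\ and the bad set has Lebesgue measure zero. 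The main anticipated obstacle lies in (a) and (b): one must verify that the combinatorial resonance structure underlying the proofs of Theorems~\ref{thm:counterex}--\ref{thm:notbounded} is genuinely robust under the substitution $k_2\mapsto pk_2$ and the perturbation $\de k_2^2$, and that the trade-off between $\e=\e(N)$ and the size of $q$ produced by \eqref{dc} still leaves a sequence $N\to\I$ for which both the counting and the negligibility of the perturbation hold simultaneously.
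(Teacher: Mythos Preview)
Your test function scales only the second frequency variable (by $p$), so after writing $p^2/\ga^2=q^2+\de$ the ``integer part'' of the phase is the \emph{anisotropic} form $k_1^2+q^2k_2^2$. The resonant set for $A[\Phi]$ (after translating to $k=0$) is then
\[
\bigl\{(a,b)\in(\Bo{Z}_N^2)^2:\ a_1b_1+q^2a_2b_2=0\bigr\},
\]
and this is exactly where the argument breaks. For irrational $\ga$ the tolerance $\e(N)\to 0$ forces $q\to\I$; once $q>N$ one has $|q^2a_2b_2|>N^2\ge |a_1b_1|$ unless $a_2b_2=0$, and then $a_1b_1=0$ as well. The count collapses to $O(N^2)$ with no logarithmic gain, so neither the $L^4$ lower bound nor the $\log N$ growth of $A[\Phi]$ follows. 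You correctly flag this trade-off between $q$ and $\e$ as the ``main anticipated obstacle,'' but it is not a verification issue: with your $\Phi$ the counting genuinely fails.

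The paper's fix is to scale \emph{both} coordinates, taking frequencies $(qk_x,\tfrac{p}{\ga}k_y)$ rather than $(mk_1,\tfrac{mp}{\ga}k_2)$. Then the phase is
\[
q^2k_x^2+\tfrac{p^2}{\ga^2}k_y^2=q^2|k|^2-\bigl(q^2-\tfrac{p^2}{\ga^2}\bigr)k_y^2,
\]
so the integer part is $q^2$ times the \emph{isotropic} $|k|^2$ and Lemma~\ref{lem:counting} applies verbatim to give the $N^2\log N$ resonant count. The paper also insists on $q>N$ (possible under \eqref{dc} for irrational $\ga$, trivial for rational), which makes the non-resonant phase gap $\ge q^2-O(1)\gtrsim N^2$; with your choice the gap is only $\gtrsim m^2$, forcing much more delicate bookkeeping. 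No $m$-periodicity trick is used: the resonant/non-resonant split is handled by direct size bounds on $\Phi$ for all small $t$. Your treatment of part~(c) via the badly-approximable reformulation and Khinchin's theorem is fine and essentially equivalent to the paper's continued-fraction argument.
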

The condition \eqref{dc} means that $\ga$ can be well approximated by a rational $\frac{p}{q}$.
For functions on $\T^2_\ga$ we consider the Fourier coefficients on the rescaled lattice $\Bo{Z}\times \frac{1}{\ga}\Bo{Z}$.
Then, if \eqref{dc} is true, the sub-lattice $q\Bo{Z}\times \frac{p}{\ga}\Bo{Z}\subset \Bo{Z}\times \frac{1}{\ga}\Bo{Z}$ is close enough to a regular lattice $(q\Bo{Z})^2$ so that a similar argument to the case of $\T^2$ can be applied.
It seems, however, that the restriction \eqref{dc} is just a technical one.

Related to the $L^6$ Strichartz estimate \eqref{counterex1}, a similar estimate for the Airy equation,
\eqq{\norm{e^{-t\p _x^3}\phi}{L^6_{t,x}(\T \times \T )}\lec \norm{\phi}{L^2(\T )},}
has been attracting attention.
This estimate itself is open so far, while Bourgain~\cite{B93-2} proved it with $\e$-loss of regularity and conjectured that it would be true.

This note is organized as follows.
In the next section, we give a proof of Theorem~\ref{thm:notbounded}.
As a by-product of the proof, in section~3 we show Theorem~\ref{thm:counterex} in a different way without using the Weyl sum approach.
These proofs are modified in Section~\ref{sec:iratori} to verify Theorem~\ref{thm:iratori}.
In the last section, we make some remarks on the $L^6$ Strichartz estimate for the Airy equation.

\section{Proof of Theorem~\ref{thm:notbounded}}\label{sec:proof}

First, we give a proof for the 2d case.
\begin{proof}[Proof of Theorem~\ref{thm:notbounded}, $d=2$]
Since
\[ e^{it\Delta}\phi _{m,N}=N^{-1}\sum _{k\in \Bo{Z}_N^2}e^{-im^2|k|^2t}e^{imk\cdot x},\]
we have
\eqq{A[\phi _{m,N}](t,x)&=cN^{-3}\sum _{k\in \Bo{Z}_{3N}^2}e^{imk\cdot x}\int _0^te^{-im^2|k|^2(t-t')}\sum _{\mat{k_1,k_2,k_3\in \Bo{Z}_N^2\\ k_1-k_2+k_3=k}}e^{-im^2(|k_1|^2-|k_2|^2+|k_3|^2)t'}\,dt'.}
Therefore, for $k\in \Bo{Z}_{3N}^2$,
\eqq{\hat{A}[\phi _{m,N}](\frac{2\pi}{m^2},mk)&=cN^{-3}\sum _{\mat{k_1,k_2,k_3\in \Bo{Z}_N^2\\ k_1-k_2+k_3=k}}\int _0^{2\pi/m^2}e^{-im^2(|k_1|^2-|k_2|^2+|k_3|^2-|k|^2)t'}\,dt'\\
&=cm^{-2}N^{-3}\# \Gamma (k),}
where the set of all \emph{resonant} interactions $\Gamma (k)$ is defined as
\eq{def:gamma}{\Gamma (k):=\Shugo{(k_1,k_2,k_3)\in (\Bo{Z}^2_{N})^3}{k_1-k_2+k_3=k,\,|k_1|^2-|k_2|^2+|k_3|^2=|k|^2}.}
This kind of set was previously observed in \cite{CKSTT10}.
In particular, the conditions for $(k_1,k_2,k_3)$ to be in $\Gamma (k)$ is equivalent to the condition that four segments $\overline{kk_1}$, $\overline{k_1k_2}$, $\overline{k_2k_3}$, $\overline{k_3k}$ form a rectangle (possibly degenerate); see \cite{CKSTT10}, section~2.2 for details.

It suffices to prove
\eq{counting}{\# \Gamma (k)\gec N^2\log N\quad \text{for $k\in \Bo{Z}^2_{N/2}$}.}
By translation, we may assume $k=0$.
The estimate \eqref{counting} then follows from the next lemma.
\end{proof}

\begin{lem}\label{lem:counting}
We have
\[ \# \Shugo{(k_1,k_3)\in (\Bo{Z}_N^2)^2}{k_1\cdot k_3=0}\gec N^2\log N.\]
\end{lem}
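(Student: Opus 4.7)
The plan is to exhibit $\gec N^2 \log N$ orthogonal pairs by summing over primitive lattice directions in $\Bo{Z}^2$. For any primitive vector $v = (q,p) \in \Bo{Z}^2$ (i.e., $\gcd(q,p)=1$), the integer multiples $jv$ contained in $\Bo{Z}_N^2$ are exactly those with $|j| \le N/\max(|q|,|p|)$, so the line $\Bo{R}v$ meets $\Bo{Z}_N^2 \setminus \{0\}$ in $\sim N/\max(|q|,|p|)$ points. The rotated vector $v^\perp := (-p,q)$ is also primitive with the same $\max$, so the perpendicular line contributes the same count, and every pair $(k_1,k_3) = (jv,\, i v^\perp)$ automatically satisfies $k_1 \cdot k_3 = 0$. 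Hence each primitive direction produces at least $(N/\max(|q|,|p|))^2$ orthogonal pairs in $(\Bo{Z}_N^2)^2$.

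Next I would group primitives by $M := \max(|q|,|p|)$. After normalizing (say $q \ge 1$, together with $v=(0,1)$), distinct primitives determine distinct lines, so the ordered pairs arising from different directions are disjoint and no double-counting occurs. For each $M \ge 2$, the number of normalized primitives with $\max(|q|,|p|) = M$ is at least $2\phi(M)$ (take $q=M$ and $|p|\le M$ coprime to $M$). Restricting to $M \le N/2$ so that $N/\max \gec N/M$, I obtain
\[
\#\Shugo{(k_1,k_3)\in (\Bo{Z}_N^2)^2}{k_1\cdot k_3=0} \gec \sum_{M=2}^{N/2} \phi(M) \cdot \frac{N^2}{M^2} = N^2 \sum_{M=2}^{N/2} \frac{\phi(M)}{M^2}.
\]
The final input is the classical asymptotic $\sum_{M\le L}\phi(M) \sim \tfrac{3}{\pi^2} L^2$; combined with Abel summation this yields $\sum_{M\le L}\phi(M)/M^2 \sim \tfrac{6}{\pi^2}\log L$, which closes the estimate.

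There is no deep mathematical obstacle here; the only subtle point is avoiding a too-coarse decomposition that would lose the logarithm. A naive attempt, such as fixing $v = (q,1)$ and summing over $q \in \{1,\dots,N\}$, only produces $\sum_q (N/q)^2 = O(N^2)$ and misses $\log N$ entirely. The logarithm genuinely arises from the density $\sim 1/M$ of primitive-direction lines of scale $M$: the $(N/M)^2$ contribution per direction multiplied by the $\sim M$ primitive directions at scale $M$ and summed produces the harmonic sum $\sum_M 1/M$, which is the source of $\log N$.
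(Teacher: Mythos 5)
Your proof is correct and follows essentially the same route as the paper's: both count orthogonal pairs by summing over primitive directions, arriving at a lower bound of the form $N^2\sum_{p}\varphi(p)/p^2$ and then extracting the logarithm. The only difference is in the last step — the paper gives a short self-contained argument, using $\sum_{d\mid n}\varphi(d)=n$ to show $\log(N+1)<\tfrac{\pi^2}{6}\sum_{p=1}^N\varphi(p)/p^2$ directly, whereas you cite the classical asymptotic $\sum_{M\le L}\varphi(M)\sim\tfrac{3}{\pi^2}L^2$ together with Abel summation; both are fine.
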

\begin{proof}
It suffices to count the number of elements of
\[ \Shugo{(p,q,r,s)\in \Bo{Z}^4}{0<q\le p\le N,\,0<s\le N, pr+qs=0},\]
but this is exactly equal to\footnote{There is a one-to-one correspondence between all the possible `direction' of vectors in $\Bo{Z}_+^2$ and all $(p,q)\in \Bo{Z}_+^2$ with $p$ co-prime to $q$.
Also, there are exactly $[N/\max \shugo{p,\,q}]$ points in $\Bo{Z}_{N}^2\cap \Bo{Z}_+^2$ for each direction $(p,q)$.}
\eqq{\sum _{\mat{0<q\le p\le N\\ \gcd (p,q)=1}}\Big[ \frac{N}{p}\Big] ^2=\sum _{p=1}^N\Big[ \frac{N}{p}\Big] ^2\varphi (p)\sim N^2\sum _{p=1}^N\frac{\varphi (p)}{p^2},}
where $[a]$ denotes the greatest integer not greater than $a$ and $\varphi(p)$ is Euler's totient function (\mbox{i.e.} the number of positive integers not greater than and relatively prime to $p$).
Recalling the identity $\sum _{d|n}\varphi (d)=n$,\footnote{
This identity follows from the fact that $\# \Shugo{k}{1\!\le \!k\!\le \!n,\, \gcd(k,n)=\frac{n}{d}}=\phi(d)$ for each positive divisor $d$ of $n$, which is a consequence of the equivalence $\gcd(m,n)=p~\Leftrightarrow ~\gcd(m/p,n/p)=1$.}
we have\footnote{Indeed, we have $\lim\limits _{N\to \I} \frac{1}{\log N}\sum\limits _{p=1}^{N}\frac{\varphi (p)}{p^2}=\frac{6}{\pi ^2}$.
This is the limiting case of the identity $\sum\limits _{p=1}^\I \frac{\varphi (p)}{p^s}=\frac{\zeta (s-1)}{\zeta (s)}$ for $s>2$, where $\zeta (s):=\sum\limits _{p=1}^\I \frac{1}{p^s}$.}
\eqq{\log (N+1)&<\sum _{n=1}^{N}\frac{1}{n}=\sum _{n=1}^N\frac{1}{n^2}\sum _{\mat{1\le p,l\le n\\ pl=n}}\varphi (p)\le \sum _{p,l=1}^N\frac{\varphi (p)}{p^2l^2}<\frac{\pi ^2}{6}\sum _{p=1}^N\frac{\varphi (p)}{p^2},}
obtaining the claim.
\end{proof}

Proof for the 1d quintic case is reduced to the above 2d result.
\begin{proof}[Proof of Theorem~\ref{thm:notbounded}, $d=1$]
Similarly to the case $d=2$, we have
\eqq{\hat{A}[\phi _{m,N}](\frac{2\pi}{m^2},mk)&=cN^{-\frac{5}{2}}\sum _{\mat{k_1,k_2,k_3,k_4,k_5\in \Bo{Z}_N\\ k_1-k_2+k_3-k_4+k_5=k}}\int _0^{2\pi/m^2}e^{-im^2(k_1^2-k_2^2+k_3^2-k_4^2+k_5^2-k^2)t'}\,dt'\\
&=cm^{-2}N^{-\frac{5}{2}}\# \Gamma '(k),}
where
\[ \Gamma '(k):=\Shugo{(k_1,k_2,k_3,k_4,k_5)\in \Bo{Z}_{N}^5}{k_1-k_2+k_3-k_4+k_5=k,\,k_1^2-k_2^2+k_3^2-k_4^2+k_5^2=k^2},\]
and it suffices to show
\eq{counting1}{\# \Gamma '(k)\gec N^2\log N\quad \text{for $k\in \Bo{Z}_{N/2}$}.}
Putting
\[ l_1=\frac{k_1+k_2}{2},\, n_1=\frac{k_1-k_2}{2},\,l_2=\frac{k_3+k_4}{2},\, n_2=\frac{k_3-k_4}{2},\,l_3=\frac{k_5+k}{2},\, n_3=\frac{k_5-k}{2},\]
we reduce \eqref{counting1} to
\eqq{\# \Shugo{(l_1,\dots ,n_3)\in \Bo{Z}_N^6}{n_1+n_2+n_3=0,\,l_1n_1+l_2n_2+l_3n_3=0,\,l_3=n_3+k}\gec N^2\log N,}
which is further simplified to
\eqq{\# \Shugo{(l_1,l_2,n_1,n_2)\in \Bo{Z}_N^4}{(l_1+n_1+n_2-k)n_1+(l_2+n_1+n_2-k)n_2=0}\gec N^2\log N.}
Then, denoting $l_j':=l_j+n_1+n_2-k$ ($j=1,2$), it suffices to show
\eqq{\# \Shugo{(l_1',l_2',n_1,n_2)\in \Bo{Z}_N^4}{l_1'n_1+l_2'n_2=0}\gec N^2\log N.}
This is nothing but Lemma~\ref{lem:counting}.
\end{proof}

\section{Proof of Theorem~\ref{thm:counterex}}\label{sec:counterex}

Here, we give another proof of the failure of \eqref{counterex2} and \eqref{counterex1}.
We use $\phi =\phi _{1,N}$ for the sequence of data breaking these estimates.
There is no difference between 1d and 2d, so we focus on the 2d estimate \eqref{counterex2}.
The exact calculation shows that
\eqq{&\norm{e^{it\Delta}\phi _{1,N}}{L^4_{t,x}(\T^3)}^4=N^{-4}\sum _{k_1,k_2,k_3,k_4\in \Bo{Z}_N^2}\int _{\T^3}e^{-i(|k_1|^2-|k_2|^2+|k_3|^2-|k_4|^2)t}e^{i(k_1-k_2+k_3-k_4)\cdot x}\,dx\,dt\\
&=(2\pi )^3N^{-4}\# \Shugo{(k_1,k_2,k_3,k_4)\in (\Bo{Z}_N^2)^4}{k_1-k_2+k_3-k_4=0,\,|k_1|^2-|k_2|^2+|k_3|^2-|k_4|^2=0}\\
&\ge (2\pi )^3N^{-4}\sum _{k_4\in \Bo{Z}_{N/2}^2}\# \Gamma (k_4)\\
&\gec \log N,}
where $\Gamma(k)$ is as in \eqref{def:gamma} and we have used \eqref{counting} at the last inequality.
Since $\tnorm{\phi _{1,N}}{L^2(\T^2)}\sim 1$, the estimate \eqref{counterex2} fails for sufficiently large $N$.
This choice of initial data is the same as \cite{TT01}, but we obtain the lower bound $(\log N)^{1/4}$, which is better than $(\log \log N)^{1/4}$ in \cite {TT01}.
For \eqref{counterex1} we can show the same lower bound $(\log N)^{1/6}$ as \cite{B93-1}.

We can also disprove
\eqq{\norm{e^{it\Delta}\phi}{L^{2+d/2}_{t,x}([0,T]\times \T^d)}\lec \norm{\phi}{L^2(\T^d)}}
for arbitrary $T>0$ ($d=1,2$) by choosing $m\in \Bo{N}$ so that $\frac{2\pi}{m^2}\le T$ and repeating the above argument with $\phi =\phi _{m,N}$.

In the remainder of this section, let us consider the $L^4$ estimate on $\T \times \T^3$:
\eq{3dL4}{\norm{e^{it\Delta}\phi}{L^4_{t,x}(\T \times \T ^3)}\lec N^{\frac{1}{4}}\norm{\phi}{L^2(\T ^3)}}
for all $\phi \in L^2$ with $\hat{\phi}(k)\equiv 0$ if $|k|>N$.
Bourgain~\cite{B93-1} proved \eqref{3dL4} up to an $\e$ loss of regularity, but \eqref{3dL4} itself has been open.
We will see that the same sequence of data $\phi _{1,N}$ does not break it any longer, so one cannot disprove \eqref{3dL4} by just adapting 1d or 2d counterexample to the 3d situation.

Recalling that $\phi _{1,N}(x):= \frac{1}{N^{3/2}}\sum\limits _{k\in \Bo{Z}_N^3}e^{ik\cdot x}$, we have
\eqs{\norm{e^{it\Delta}\phi _{1,N}}{L^4_{t,x}(\T^4)}^4=(2\pi )^4N^{-6}\sum _{k_4\in \Bo{Z}_{N}^3}\# \Gamma ''(k_4),\\
\Gamma ''(k_4):=\Shugo{(k_1,k_2,k_3)\in (\Bo{Z}_N^3)^3}{k_1-k_2+k_3=k_4,\,|k_1|^2-|k_2|^2+|k_3|^2=|k_4|^2}.}
We will show $\# \Gamma ''(k)\lec N^4$ for any $k\in \Bo{Z}^3_N$, which verifies that $\phi _{1,N}$ obeys \eqref{3dL4} for all $N$.
Similarly to the 2d case we observed in the previous section, the rectangular structure of resonant frequencies implies that it suffices to prove $\# \Gamma'' (0)\lec N^4$, and that $\# \Gamma ''(0)\le \# \Shugo{(k_1,k_3)\in (\Bo{Z}^3_N)^2}{k_1\cdot k_3=0}$.

In the case that either $k_1$ or $k_3$ is located on an axis, we easily obtain a bound of $O(N^3)$.
For instance, if $k_1\in \Bo{Z}_N\times \shugo{(0,0)}$, then $k_3$ must be in $\shugo{0}\times \Bo{Z}_N^2$.

Hence, without loss of generality, we count the number of $(k_1,k_3)$ such that $k_1\in \Shugo{(x,y,z)\in \Bo{Z}^3}{0<z\le y\le x\le N}$.
For any `direction' $(a,b,c)$ ($0<c\le b\le a\le N$ and $\gcd (a,b,c)=1$), there are exactly $[\frac{N}{a}]$ choices of $k_1$ facing in that direction, and $k_3$ must be perpendicular to that direction for such a $k_1$.
Therefore, the number of possible $(k_1,k_3)$ is bounded by
\eq{est11}{\sum _{\mat{0<c\le b\le a\le N\\ \gcd (a,b,c)=1}}\frac{N}{a}\# \Big( \Sc{P}_{(a,b,c)}\cap \Bo{Z}_N^3\Big) ,}
where $\Sc{P}_{(a,b,c)}$ is the plane in $\R^3$ including the origin and perpendicular to $(a,b,c)$.
Then, it suffices to prove $\eqref{est11}=O(N^4)$.

Fix $(a,b,c)$ satisfying the above condition and set $n:=\gcd (a,b)$, $(a,b)=(na',nb')$.
Then, 
\eqq{\# \Big( \Sc{P}_{(a,b,c)}\cap \Bo{Z}_N^3\cap \Shugo{(x,y,z)\in \Bo{Z}^3}{z=0}\Big) \sim \frac{N}{a'}.}
Since $\gcd (c,n)=1$, the plane $\Shugo{(x,y,z)\in \Bo{Z}^3}{z=l}$ is possible to intersect $\Sc{P}_{(a,b,c)}\cap \Bo{Z}_N^3$ if and only if $l$ is a multiple of $n$.
Therefore, we have
\eqq{\# \Big( \Sc{P}_{(a,b,c)}\cap \Bo{Z}_N^3\Big) \sim \frac{N}{a'}\cdot \frac{N}{n}=\frac{N^2}{a},}
and then
\eq{est12}{&\sum _{\mat{0<c\le b\le a\le N\\ \gcd (a,b,c)=1}}\frac{N}{a}\# \Big( \Sc{P}_{(a,b,c)}\cap \Bo{Z}_N^3\Big) \sim N^3\sum _{a=1}^N\frac{1}{a^2}\# \Shugo{(b,c)}{0<c\le b\le a,\,\gcd (a,b,c)=1}.}

We also have
\eqq{&\# \Shugo{(b,c)}{0<c\le b\le a,\,\gcd (a,b,c)=1}=\sum _{\mat{1\le n\le a\\ n|a}}\sum _{\mat{1\le b\le a\\ \gcd (b,a)=n}}\sum _{\mat{1\le c\le b\\ \gcd (c,n)=1}}1\\
&=\sum _{\mat{1\le n\le a\\ n|a}}\sum _{\mat{1\le b'\le a/n\\ \gcd (b',a/n)=1}}b'\fy (n)\le \sum _{\mat{1\le n\le a\\ n|a}}\sum _{\mat{1\le b'\le a/n\\ \gcd (b',a/n)=1}}\frac{a}{n}\cdot n=a\sum _{\mat{1\le n\le a\\ n|a}}\fy (\frac{a}{n})=a^2,}
where at the last equality we have used that $\sum\limits _{n|a}\fy (\frac{a}{n})=\sum\limits _{n|a}\fy(n)=a$.
Plugging this into \eqref{est12} we obtain $\eqref{est11}\lec N^4$, as desired.

\section{The case of general tori}\label{sec:iratori}

Let us give a proof of Theorem~\ref{thm:iratori} for a general 2d torus $\T _\ga ^2:=\R^2/(2\pi \Bo{Z}\times 2\pi \ga \Bo{Z})$ ($\ga >0$).
Under the condition \eqref{dc}, for any $N\gg 1$ we can choose $p,q\in \Bo{N}$ such that
\eqq{\Big| q^2-\frac{p^2}{\ga ^2}\Big| <\frac{1}{N^2},\qquad q>N.}
Indeed, it is trivial if $\gamma \in \Bo{Q}$, and otherwise it suffices to take $p,q$ given in \eqref{dc} with  
\[ \e =\min \shugo{\frac{1}{N^2},\,\min _{\mat{p,q\in \Bo{N},\, q\le N}}\Big| q^2-\frac{p^2}{\ga ^2}\Big|}>0.\]
Using such $(p,q)$ we define $\phi _N:\T^2_\ga \to \Bo{C}$ by
\[ \phi _N(x,y):=\frac{1}{N}\sum _{k=(k_x,k_y)\in \Bo{Z}_N^2}e^{i(qk_xx+\frac{p}{\ga}k_yy)},\]
which has the size of $O(1)$ in $L^2(\T^2_\ga)$.
Similarly to Section~\ref{sec:proof}, we have
\eqq{\hat{A}[\phi _{N}](t,qk_x,\frac{p}{\ga}k_y)&=ce^{-i(q^2k_x^2+\frac{p^2}{\ga ^2}k_y^2)t}N^{-3}\sum _{\mat{k_1,k_2,k_3\in \Bo{Z}_N^2\\ k_1-k_2+k_3=k}}\int _0^te^{-i\Phi t'}\,dt'}
for $k=(k_x,k_y)\in \Bo{Z}^2$, where
\eqq{\Phi :=&\,q^2(k_{1,x}^2-k_{2,x}^2+k_{3,x}^2-k_x^2)+\frac{p^2}{\ga ^2}(k_{1,y}^2-k_{2,y}^2+k_{3,y}^2-k_y^2)\\
=&\,q^2(|k_1|^2-|k_2|^2+|k_3|^2-|k|^2)-\Big( q^2-\frac{p^2}{\ga ^2}\Big) (k_{1,y}^2-k_{2,y}^2+k_{3,y}^2-k_y^2).}
We split $A[\phi _N]$ into the resonant part and the non-resonant part:
\eqq{&\hat{A}[\phi _{N}](t,qk_x,\frac{p}{\ga}k_y)\\
&=ce^{-i(q^2k_x^2+\frac{p^2}{\ga ^2}k_y^2)t}N^{-3}\bigg( \sum _{|k_1|^2-|k_2|^2+|k_3|^2=|k|^2}+\sum _{|k_1|^2-|k_2|^2+|k_3|^2\neq |k|^2}\bigg) \int _0^te^{-i\Phi t'}\,dt'\\
&=:\hat{A}_{res}[\phi _{N}](t,qk_x,\frac{p}{\ga}k_y)+\hat{A}_{nonres}[\phi _{N}](t,qk_x,\frac{p}{\ga}k_y).}

For the resonant part, we have
\eqq{|\Phi |=\Big| \Big( q^2-\frac{p^2}{\ga ^2}\Big) (k_{1,y}^2-k_{2,y}^2+k_{3,y}^2-k_y^2)\Big| \lec \frac{1}{N^2}\cdot N^2=1,}
which implies $\Re \int _0^te^{-i\Phi t'}dt'\ge \frac{t}{2}$ for any $0<t\ll 1$.
Therefore, from \eqref{counting} we have
\eqq{|\hat{A}_{res}[\phi _N](t,qk_x,\frac{p}{\ga}k_y)|\gec tN^{-1}\log N}
for any $k\in \Bo{Z}^2_{N/2}$ and $0<t\ll 1$.
For the non-resonant part, it holds that
\eqq{|\Phi |&\ge |q^2(|k_1|^2-|k_2|^2+|k_3|^2-|k|^2)|-\Big| \Big( q^2-\frac{p^2}{\ga ^2}\Big) (k_{1,y}^2-k_{2,y}^2+k_{3,y}^2-k_y^2)\Big| \\
&\ge q^2-O(1)\gec N^2,}
which implies $|\int _0^te^{-i\Phi t'}dt'|\lec N^{-2}$ for any $t\in \R$.
Therefore,
\eqq{|\hat{A}_{nonres}[\phi _N](t,qk_x,\frac{p}{\ga}k_y)|\lec N^{-3}\cdot N^4\cdot N^{-2}=N^{-1}.}
Consequently, we have
\eqq{\norm{A[\phi _N](t)}{L^2(\T^2_\ga )}\gec t\log N}
for any $(\log N)^{-1}\ll t\ll 1$, which shows that the map $\phi \mapsto A[\phi ](t)$ on $L^2$ is not continuous at the origin for any $0<t\ll 1$.
From this we deduce the same conclusion as Corollary~\ref{cor:notsmooth}.
The failure of the $L^4$ Strichartz estimate is shown in a similar manner; we refer to Section~\ref{sec:counterex} and omit the proof.

Finally, we observe the condition \eqref{dc}, which is equivalent to the following:
\eq{dc'}{\text{For any $\e >0$, there exist $p,q\in \Bo{N}$ such that $\Big| \ga -\frac{p}{q}\Big| <\frac{\e}{q^2}$.}}
This is satisfied if $\gamma \in \Bo{Q}$, so we may assume $\gamma \not\in \Bo{Q}$.
Recall the continued fraction expansion of $\gamma$ (\mbox{cf.} \cite{HWbook}, chapter X): There exists a unique sequence of positive integers $\shugo{a_n}_{n=1}^{\I}$ such that $\gamma$ is the limit of the sequence of finite continued fractions
\eq{pnqn}{[\ga ]+\frac{1}{a_1+\frac{1}{a_2+\frac{1}{\ddots +\frac{1}{a_n}}}},\qquad n=1,2,\dots}
as $n\to \I$.
It is known (\cite{HWbook}, section~11.10) that \eqref{dc'} will be satisfied if the sequence $\shugo{a_n}$ corresponding to $\ga$ is unbounded,%
\footnote{
In fact, \eqref{dc'} holds if and only if $\shugo{a_n}$ is unbounded.
To see this, let $p_n,q_n\in \Bo{N}$ be such that $\frac{p_n}{q_n}$ is the irreducible fraction representation of \eqref{pnqn}.
Then, it holds that under the convention $q_0=1$

\centerline{$\tfrac{1}{q_n^2a_{n+1}}>|\gamma -\tfrac{p_n}{q_n}|>\tfrac{1}{q_n^2(a_{n+1}+2)}>\tfrac{1}{q_{n-1}^2(a_n+1)^2(a_{n+1}+2)}$}

\noindent for any $n$.
Moreover, the sequence $\shugo{\frac{p_n}{q_n}}$ is the best rational approximation of $\gamma$ in the sense that

\centerline{$|\gamma -\frac{p_n}{q_n}| =\min \Shugo{|\gamma -\frac{p}{q}|}{p,q\in \Bo{N},\,q\le q_n}$}

\noindent for any $n$ (see \cite{HWbook}, Theorem~181).
Hence, if $a_n\le M<\I$ for all $n$, we have $|\gamma -\frac{p}{q}|>\frac{1}{(M+1)^2(M+2)q^2}$ for any $p,q\in \Bo{N}$.
As a corollary, it turns out that no quadratic irrational $\ga$ (\mbox{i.e.} irrational root of a quadratic equation with integral coefficients) satisfy \eqref{dc}, since $\shugo{a_n}$ for such an irrational is periodic (see \cite{HWbook}, Theorem~177).
}
and that the set of $\ga$ for which $\shugo{a_n}$ is bounded is null.
Hence, almost every $\ga >0$ satisfies \eqref{dc}.

\section{Note on the $L^6$ Strichartz estimate for the Airy equation}

The $L^6$ Strichartz estimate for the Airy equation,
\eq{KdVL6}{\norm{e^{-t\p _x^3}\phi}{L^6_{t,x}(\T \times \T )}\lec \norm{\phi}{L^2(\T )},}
is a challenging open problem proposed by Bourgain~\cite{B93-2}, who proved instead
\eq{KdVL6'}{\norm{e^{-t\p _x^3}\phi}{L^6_{t,x}(\T \times \T )}\lec _\e N^\e \norm{\phi}{L^2(\T )}}
for all $\phi \in L^2$ with $\hat{\phi}(k)\equiv 0$ if $|k|>N$.

Let us recall the proof of \eqref{KdVL6'}.
Let $\phi =\sum\limits _{k\in \Bo{Z}_N}a_ke^{ikx}$.
By the Cauchy-Schwarz inequality,
\eqq{\norm{e^{-t\p _x^3}\phi}{L^6_{t,x}(\T ^2)}^6&=\norm{\sum _{k_1,k_2,k_3\in \Bo{Z}_N}a_{k_1}a_{k_2}a_{k_3}e^{i(k_1^3+k_2^3+k_3^3)t}e^{i(k_1+k_2+k_3)x}}{L^2_{t,x}(\T ^2)}^2\\
&=\sum _{n,k\in \Bo{Z}}\Bigg| \sum _{\mat{k_1,k_2,k_3\in \Bo{Z}_N\\ k_1+k_2+k_3=k,\,k_1^3+k_2^3+k_3^3=n}}a_{k_1}a_{k_2}a_{k_3}\Bigg| ^2\\
&\le \sum _{k\in \Bo{Z}_N}\Bigg| 3\sum _{k_1\in \Bo{Z}_N}a_{k_1}a_{-k_1}a_k\Bigg| ^2\\
&\hx +\bigg( \sup _{\mat{n\in \Bo{Z}_{3N^3},\,k\in \Bo{Z}_{3N}\\ n\neq k^3}}\# \Gamma_{\text{Airy}} (n,k)\bigg) \sum _{n,k\in \Bo{Z}}\sum _{\mat{k_1,k_2,k_3\in \Bo{Z}_N\\ k_1+k_2+k_3=k,\,k_1^3+k_2^3+k_3^3=n}}|a_{k_1}|^2|a_{k_2}|^2|a_{k_3}|^2\\
&\le 9\norm{\phi}{L^2(\T )}^6+\norm{\phi}{L^2(\T )}^6\sup _{\mat{n\in \Bo{Z}_{3N^3},\,k\in \Bo{Z}_{3N}\\ n\neq k^3}}\# \Gamma_{\text{Airy}} (n,k),}
where
\eqq{\Gamma_{\text{Airy}} (n,k):=\Shugo{(k_1,k_2,k_3)\in \Bo{Z}_N^3}{k_1+k_2+k_3=k,\,k_1^3+k_2^3+k_3^3=n}.}
Some divisor-counting argument then yields the bound
\eqq{\# \Gamma_{\text{Airy}} (n,k)\lec e^{\frac{c\log N}{\log \log N}}\lec _\e N^\e \quad \text{for $|n|\lec N^3$ and $|k|\lec N$ \mbox{s.t.} $n\neq k^3$},}
which implies \eqref{KdVL6'}.

The above proof says that we could establish \eqref{KdVL6} if we had the uniform estimate
\eq{countingB}{\# \Gamma_{\text{Airy}} (n,k)\lec 1\quad \text{for $n,k\in \Bo{Z}$ \mbox{s.t.} $n\neq k^3$}.}
A similar estimate holds if we put some further restriction on the set $\Gamma_{\text{Airy}} (n,k)$.
In fact, Colliander et al.~\cite{CKSTT04} proved \eqref{countingB} with $\Gamma_{\text{Airy}} (n,k)$ replaced by
\eqq{\Gamma_{\text{Airy}} '(n,k):=\Shugo{(k_1,k_2,k_3)\in \Gamma_{\text{Airy}} (n,k)}{|k_{max}|\gg |k_{med}|\gg |k_{min}|,\,|k_{max}|\gg |k_{min}|^3},}
where $k_{max}$, $k_{med}$, $k_{min}$ are the maximum, the median, and the minimum among $k_1,k_2,k_3$, respectively,
and used it in \cite{CKSTT03} to prove the global well-posedness of the Korteweg-de Vries equation on $\T$ at the limiting regularity $H^{-1/2}$.

Unfortunately, \eqref{countingB} itself is false.\footnote{
The author could not find any article pointing out this fact.
}
More precisely, it holds that for each $N\gg 1$ there exists $k\in \Bo{Z}$ such that $|k|\lec N$ and
\eqq{\# \Gamma_{\text{Airy}} (\frac{k^3}{9},k)\gec \log N.}
To see this, we first notice that $k_1+k_2+k_3=k$ and $k_1^3+k_2^3+k_3^3=n$ imply 
\eqs{(k-k_1)+(k-k_2)+(k-k_3)=2k,\\
(k-k_1)(k-k_2)(k-k_3)=\frac{1}{3}\big\{ (k_1+k_2+k_3)^3-(k_1^3+k_2^3+k_3^3)\big\} =\frac{k^3-n}{3}.}
Putting $k_j':=\frac{1}{2}(k-k_j)$ ($j=1,2,3$), we consider sets of three integers $k_1',k_2',k_3'$  which have common sum and product.
Now, consider three rational numbers
\eqq{\frac{(x+1)^2}{x},\quad -\frac{x^2}{x+1},\quad -\frac{1}{x(x+1)}}
for some $x\in \Bo{N}$.
Note that the sum and the product of these three numbers are $3$ and $1$, respectively, independent of $x$.\footnote{This is actually true for three fractions $\frac{x^2}{yz},\frac{y^2}{zx},\frac{z^2}{xy}$ with $x,y,z\in \Bo{Z}$ satisfying $x+y+z=0$ and $xyz\neq 0$.
The above one is just a special case ($y=1$, $z=-x-1$) of it.}
Moreover, a different choice of $x$ gives a different set of three numbers, since these three fractions are all irreducible.
Hence, for an arbitrary $m\in \Bo{N}$ we find $m$ sets of three integers
\eqq{\big( \frac{(x+1)^2}{x}M,\quad -\frac{x^2}{x+1}M,\quad -\frac{1}{x(x+1)}M\big) ,\quad x\in \shugo{1,2,\dots ,m}}
with the common sum $3M$ and the common product $M^3$, where $M$ denotes the least common multiple\footnote{
Since $x+1$ is relatively prime to $x$, $M/x(x+1)\in \Bo{Z}$.
} %
of $1,2,\dots ,m+1$.
It is easily verified that
\eqq{\log M=\log \prod _{\mat{p:\text{prime}\\p\le m+1}}p^{\left[ \frac{\log (m+1)}{\log p}\right]}\le \pi (m+1)\log (m+1),}
where $\pi (n)$ is the number of prime numbers not greater than $n$.
From the prime number theorem\footnote{$\pi (n)\sim \frac{n}{\log n}$ ($n\gg 1$).} we obtain $\log M \lec m$ for large $m$.
Finally, from $2\cdot 3M=2k$ and $8\cdot M^3=\frac{1}{3}(k^3-n)$ we have $k=3M$ and $n=3M^3$, and we find $m$ sets of three integers
\eqq{\big( (3-\frac{2(x+1)^2}{x})M,\quad (3+\frac{2x^2}{x+1})M,\quad (3+\frac{2}{x(x+1)})M\big) ,\quad x\in \shugo{1,2,\dots ,m}}
with the common sum $3M$ and the common cubic sum $3M^3$.

We remark that the above observation seems not strong enough to disprove \eqref{KdVL6}, because the logarithmic growth is verified only for very few $(n,k)$.
Conversely, \eqref{KdVL6} could be established if we had
\eqq{\sum _{k\in \Bo{Z}}\bigg| \sum _{\mat{k_1,k_2,k_3\in \Bo{Z}_N\\ k_1+k_2+k_3=k,\,k_1^3+k_2^3+k_3^3=k^3/9}}a_{k_1}a_{k_2}a_{k_3}\bigg| ^2\lec \norm{\phi}{L^2(\T )}^6}
and
\eqq{\# \Gamma_{\text{Airy}} (n,k)\lec 1\quad \text{for $n,k\in \Bo{Z}$ \mbox{s.t.} $n\neq k^3$ and $n\neq \dfrac{k^3}{9}$}.}
It is not clear, however, whether both or either of them are true.



\end{document}